\documentclass[preprint,10pt]{elsarticle}

\usepackage{amsmath,amsfonts,amssymb}
\usepackage{color}
\usepackage[hidelinks]{hyperref}
\usepackage{graphicx}
\usepackage{bm}

\newproof{proof}{Proof}
\newtheorem{lemma}{Lemma}[section]
\newtheorem{theorem}{Theorem}[section]

\usepackage[a4paper, total={6.5 in, 9.25 in}]{geometry}

\newcommand{\vect}[1]{\boldsymbol{#1}}

\journal{Applied Mathematics Letters}

\begin{document}

\begin{frontmatter}



\title{Global stability in a negative chemotaxis system with\\ chemically induced lethality}


\author[ucm,icai]{Federico Herrero-Hervás\corref{cor1}}
\ead{fedher01@ucm.es}
\author[ucm]{Mihaela Negreanu}
\ead{negreanu@mat.ucm.es}

\cortext[cor1]{Corresponding author}

\address[ucm]{Departamento de Análisis Matemático y Matemática Aplicada, Instituto de Matemática Interdisciplinar, Universidad Complutense de Madrid, Madrid, Spain.}
\address[icai]{Departamento de Matemática Aplicada, ICAI, Universidad Pontificia de Comillas, Madrid, Spain.}
\begin{abstract}

In this paper, we investigate the long-time dynamics of a repulsive Keller-Segel chemotaxis system. The model features negative chemotaxis, logistic growth and a cell death term, accounting for a lethal chemorepellent that is self-produced by the cells and externally supplied. We prove that, for constant chemorepellent supplies, depending on their magnitude with respect to the logistic growth rate, solutions converge in $L^\infty$ norm toward extinction of the population, or equilibrate toward a nontrivial spatially homogeneous steady state.
\end{abstract}

\begin{keyword}
	chemotaxis \sep cross-diffusion \sep stability \sep Keller-Segel
\end{keyword}

\end{frontmatter}

\section{Introduction}\label{intro}
\textit{Chemotaxis} is a biological phenomenon by which a wide range of living systems are able to orient their movement in response to chemical stimuli. Different chemical agents, for instance nutrients, growth factors, recruitment signals, or proteins lead to attractive motion, known as \textit{positive chemotaxis} or \textit{chemoattraction}. On the contrary, other substances, such as antibiotics or toxins induce repulsive dynamics, which are referred to as \textit{negative chemotaxis} or \textit{chemorepulsion}. These substances, respectively named \textit{chemoattractants} or \textit{chemorepellents}, may in some cases be self-produced by individuals of the species in order to foster self-aggregation or inhibition (for a detailed description, see for instance \cite{chem-bio-rev})

In this paper, we study a model that has been recently proposed in \cite{FH1} to describe the interactions between a biological species and a self-produced lethal chemorepellent. The model assumes that cells undergo a diffusive process, as well as chemorepulsion, migrating towards areas of lower substance concentration. A logistic term is added, to account for the population growth, as well as a term for the substance-induced lethality. For the chemorepellent, aside from diffusion and self-production by the individuals, the model includes time degradation and an external supply by means of a known function $f$.
In this way, the dynamics are described by the following system of two coupled parabolic partial differential equations
\begin{equation}\label{1.1}
     \begin{cases}
    \displaystyle    \frac{\partial u}{\partial t} = D \Delta u + \chi \nabla \cdot ( u \nabla v) + ru(1-u) - uv , & \quad x \in \Omega, ~ t > 0, \\[1.5ex]
   \displaystyle \frac{\partial v}{\partial t} = \Delta v + a u - v + f(x,t), & \quad x \in \Omega, ~ t > 0, \\[1.5ex]
    \displaystyle \frac{\partial u}{\partial \nu} = \frac{\partial v}{\partial \nu} = 0,  & \quad x \in\partial \Omega, ~ t>0, \\[1.5ex]
    \displaystyle u(x,0) = u_0(x), \quad v(x,0) = v_0(x),  & \quad x \in \Omega,
    \end{cases}
\end{equation}
where $u$ represents the population density of the species, $v$ the concentration of the chemorepellent, and $\nu$ the outward unit normal vector to $\partial \Omega$. Parameters $D, ~\chi,~r>0$ respectively representing the diffusion coefficient of the bacteria, the chemotaxis sensitivity coefficient, and the logistic growth rate, while $a, f(x,t)\geq 0$ are the self-production rate of the substance and the external supply, respectively.

System \eqref{1.1} is an extension of the well-known Keller-Segel model for chemotaxis \cite{KS1, KS2}, to which an extensive body of research has been devoted over the years (see for instance the surveys \cite{Bellomo, Horstmann}). While the majority of the studies concern positive chemotaxis, other negative chemotaxis systems similar to \eqref{1.1} have recently been analyzed, as for instance \cite{Deng2023, CieslakFuestHajdukSierzega2024}, or a combination of attraction and repulsion toward two different chemicals \cite{Tao2013-rep, Ren2020, Frassu2021}.

For system \eqref{1.1}, the current results are limited on the one hand to the existence and uniqueness of solutions for $f \in L^\infty(\Omega \times (0,\infty)) \cap C^1(\hspace{0.05 cm}\overline{\Omega} \times (0, \infty))$ for any $r>0$ for bounded domains with dimension $n \leq 2$, or for large enough $r>0$ on convex bounded domains for arbitrary $n \in \mathbb{N}$.

Moreover, asymptotic periodicity properties of $u$ and $v$ have been studied if $f(x,t)$ has a persistent periodic behavior in \cite{FH3}, and a linearized steady state analysis for constant supplies, i.e. $f(x,t) \equiv f$ in \cite{FH1}. In such a case, system \eqref{1.1} admits two spatially homogeneous steady states, given by 
\begin{equation}\label{estados}
    (0,f), \quad (u_*, v_*) := \displaystyle \left(\frac{r-f}{r+a}, \frac{r(f+a)}{r+a} \right),
\end{equation}
where the latter is only biologically meaningful if $r > f$. Particularly, the results in \cite{FH1} highlight the following local stability criterion.
\begin{itemize}
    \item If $r < f$, the only nonnegative spatially homogeneous steady state is $(0,f)$, which is locally asymptotically stable.
    \item If $r > f$, both spatially homogeneous steady states, $(0,f)$ and $(u_*,v_*)$, are biologically meaningful. In this case, $(0,f)$ is unstable, whereas $(u_*,v_*)$ is locally asymptotically stable.
\end{itemize}
The main objective of this work is to extend this local stability criterion to a global result, obtaining asymptotic convergence of solutions to these steady states under suitable conditions. 

\section{Global stability of spatially homogeneous steady states}
\subsection{The case $r<f$}
We begin by analyzing the case in which the external supply $f$ exceeds the logistic growth rate $r$. We recall that in this case, the only spatially homogeneous steady state is $(0,f)$, which is locally asymptotically stable. Here, we prove that if $r$ is large enough, then any solution stabilizes toward such steady state. To do so, we consider the following functionals
\begin{equation}\label{2.1}
    E_1(t) := \int_\Omega u + \frac{k}{2} \int_\Omega (v-f)^2, \quad F_1(t) := \int_\Omega u^2 + \int_\Omega (v-f)^2, \quad t >0,
\end{equation}
where $k>0$ will later be adjusted so that $E_1$ decreases along trajectories. This results in the following lemma.
\begin{lemma}\label{l2.1}
Let $f > r$ and assume that $(u,v)$ is a global bounded classical solution of \eqref{1.1} and that $r \geq a$. Then there exist two positive constants $k$ and $c_1$ such that $E_1$ and $F_1$ satisfy
$$E_1(t) \geq 0, \quad \frac{d}{dt}E_1(t) \leq - c_1 F_1(t) \leq 0, \quad \text{for all } t >0.$$
\end{lemma}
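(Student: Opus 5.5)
The plan is a direct energy computation: differentiate each part of $E_1$ along trajectories, use the equations of \eqref{1.1} with the Neumann boundary conditions, and choose $k$ so that the only sign-indefinite term, the cross term $\int_\Omega u(v-f)$, is cancelled or absorbed. Nonnegativity of $E_1$ is immediate once we know $u\ge 0$. This holds by the parabolic comparison principle for the first equation, under the standing assumption $u_0\ge 0$, since $u\equiv 0$ is a subsolution.

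\emph{Step 1 (mass term).} Integrate the $u$-equation over $\Omega$. The diffusion and chemotaxis terms are in divergence form with zero flux, so they vanish. Writing $v=(v-f)+f$ gives
\begin{equation*}
\frac{d}{dt}\int_\Omega u=(r-f)\int_\Omega u-r\int_\Omega u^2-\int_\Omega u(v-f).
\end{equation*}
Since $f>r$ and $u\ge 0$, the first term is nonpositive and we discard it.

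\emph{Step 2 (chemical term).} Test the $v$-equation, rewritten as $(v-f)_t=\Delta(v-f)+au-(v-f)$ (using that $f$ is constant), against $v-f$. This gives
\begin{equation*}
\frac12\frac{d}{dt}\int_\Omega (v-f)^2=-\int_\Omega|\nabla v|^2-\int_\Omega (v-f)^2+a\int_\Omega u(v-f).
\end{equation*}

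\emph{Step 3 (choice of $k$).} Adding Step 1 to $k$ times Step 2 yields
\begin{equation*}
\frac{d}{dt}E_1\le -r\int_\Omega u^2-k\int_\Omega (v-f)^2+(ka-1)\int_\Omega u(v-f).
\end{equation*}
If $a>0$, the natural choice is $k=1/a$, which kills the cross term. We then get $\frac{d}{dt}E_1\le -\min\{r,1/a\}F_1$, so we may take $c_1=\min\{r,1/a\}$; the hypothesis $r\ge a$ even gives $c_1=1/a$. If $a=0$, we take $k=1/r$ and bound the cross term by Young's inequality:
\begin{equation*}
\Bigl|\int_\Omega u(v-f)\Bigr|\le \frac r2\int_\Omega u^2+\frac1{2r}\int_\Omega (v-f)^2 .
\end{equation*}
This gives $c_1=\frac1{2r}\min\{r^2,1\}$. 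Alternatively, one may keep a single Young-type choice with $k$ depending on $r$ and $a$, where $r\ge a$ guarantees positivity of the resulting constants.

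The only delicate point is this absorption of the cross term, which the choice of $k$ above resolves. The remaining steps are routine, given that classical bounded solutions justify the differentiation under the integral and the integrations by parts.
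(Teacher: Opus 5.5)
Your proof is correct, but it takes a different and better route than the paper at the key step.

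\textbf{The paper's route.} It bounds the two cross terms separately by Young's inequality, $\int_\Omega u(f-v)\le \frac{1}{4\varepsilon_1}\int_\Omega u^2+\varepsilon_1\int_\Omega (v-f)^2$ and $a\int_\Omega u(v-f)\le \frac{a^2k}{2r}\int_\Omega (v-f)^2+\frac{r}{2k}\int_\Omega u^2$. It then has to find $k,\varepsilon_1$ with $\frac{1}{2r}\le\varepsilon_1\le k-\frac{a^2k^2}{2r}$, as in \eqref{2.13}. This feasibility requirement is exactly where the hypothesis $r\ge a$ comes from.

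\textbf{Your route.} You observe that both cross terms are multiples of the same integral $\int_\Omega u(v-f)$, with combined coefficient $ka-1$. Choosing $k=1/a$ cancels them exactly and leaves $\frac{d}{dt}E_1\le -r\int_\Omega u^2-\frac1a\int_\Omega (v-f)^2$.

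\textbf{What this buys.} The argument is shorter and strictly stronger. The hypothesis $r\ge a$ is never used, so the lemma, and with it Theorem \ref{tc1}, holds for all $0<r<f$ and all $a\ge 0$. It also handles the borderline case $r=a$. There the paper's feasible set shrinks to the single point $k=1/a$, $\varepsilon_1=\frac{1}{2r}$, at which both coefficients in \eqref{2.12} vanish. So the paper's argument does not actually produce a positive $c_1$ at $r=a$. The same caveat applies to your closing ``alternative'' Young-type route, which is essentially the paper's. Your $a=0$ case agrees with the paper's choice $k=1/r$ up to how the Young split is made.

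\textbf{One small slip.} Your remark that $r\ge a$ ``even gives $c_1=1/a$'' is wrong. The equality $\min\{r,1/a\}=1/a$ requires $ra\ge 1$, which fails for example at $r=\frac12$, $a=\frac1{10}$. This is harmless, because $c_1=\min\{r,1/a\}>0$ in every case.
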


\begin{proof}

Firstly, its is evident from the nonnegativity of $u$ that for any $k > 0$, $E_1(t) \geq 0$ for all $t >0$. 

Next, we compute the time derivative of each of the summands in $E_1$. Integrating by parts we obtain
\begin{equation} \nonumber
    \begin{split}
        \frac{d}{dt}\int_\Omega u & =   \int_\Omega \Big (D \Delta u + \chi \nabla \cdot ( u \nabla v) + ru(1-u) - uv \Big) = r \int_\Omega u (1-u) - \int_\Omega uv \\
        & = - r\int_\Omega u^2 + \int_\Omega u(r-v) = - r\int_\Omega u^2 + \int_\Omega u(f-v) + \int_\Omega u(r-f) \\
        & < - r\int_\Omega u^2 + \int_\Omega u(f-v) , \quad \text{for all } t >0.
    \end{split}
\end{equation}
where we used that $f > r$ in the last step. Next, by Young's inequality, for an arbitrary $\varepsilon_1 > 0$ we have
\begin{equation}\label{2.10}
   \begin{split}
    \frac{d}{dt}\int_\Omega u &\leq - r\int_\Omega u^2 + \frac{1}{4 \varepsilon_1}  \int_\Omega u^2 + \varepsilon_1 \int_\Omega (v-f)^2= - \left(r - \frac{1}{4 \varepsilon_1} \right)\int_\Omega u^2 + \varepsilon_1 \int_\Omega (v-f)^2, 
\end{split}
\end{equation}
for all $t>0$. Similarly, for any given $k>0$
\begin{equation} \label{2.11}
    \begin{split}
    \frac{1}{2}\frac{d}{dt} \int_\Omega (v-f)^2  &=  \int_\Omega (v-f) ~v_t =  \int_\Omega (v-f) ( \Delta v + a u - v + f ) = - \int_\Omega |\nabla v|^2 + a \int_\Omega u(v-f)\\
     & - \int_\Omega (v-f)^2 \leq - \int_\Omega |\nabla v|^2 + \frac{a^2 k}{2r} \int_\Omega (v-f)^2 + \frac{r}{2k} \int_\Omega u^2 - \int_\Omega (v-f)^2 \\
     & = - \int_\Omega |\nabla v|^2 - \left(1- \frac{a^2 k}{2r} \right) \int_\Omega (v-f)^2 + \frac{r}{2k} \int_\Omega u^2, \quad \text{for all } t>0.
    \end{split}
\end{equation}
Thus, combining \eqref{2.10} and \eqref{2.11} one obtains
\begin{equation} \label{2.12}
\begin{split}
    \frac{d}{dt} E_1(t) &= \frac{d}{dt} \left(\int_\Omega u + \frac{k}{2} \int_\Omega (v-f)^2 \right) < -\left(\frac{r}{2} - \frac{1}{4 \varepsilon_1} \right)\int_\Omega u^2  - \left(k- \frac{a^2 k^2}{2r} - \varepsilon_1 \right) \int_\Omega (v-f)^2 
\\ & - k \int_\Omega |\nabla v|^2  \leq -\left(\frac{r}{2} - \frac{1}{4 \varepsilon_1} \right)\int_\Omega u^2  - \left(k- \frac{a^2 k^2}{2r} - \varepsilon_1 \right) \int_\Omega (v-f)^2, \quad \text{for all } t>0.    
\end{split}
\end{equation}
The last part is to adequately select a $k$ and $\varepsilon_1$ such that both coefficients are non-positive, this is $\displaystyle \frac{r}{2} - \frac{1}{4 \varepsilon_1} \geq 0$, and $\displaystyle k- \frac{a^2 k^2}{2r} - \varepsilon_1 \geq 0$.
It follows immediately that $k$ and $\varepsilon_1$ must fulfill
\begin{equation} \label{2.13}
    \frac{1}{2r} \leq \varepsilon_1 \leq k - \frac{a^2k^2}{2r} =: Q(k).
\end{equation}
Thus, for \eqref{2.13} to hold, the maximum of the parabola $Q(k)$ cannot to be attained below $1/2r$. 

If $a>0$, the vertex of the parabola is located at $\big(k^*, Q(k^*)\big) := \displaystyle\left( \frac{r}{a^2}, \frac{r}{2a^2} \right)$, and hence \eqref{2.13} holds if $Q(k^*) \geq 1/2r$, or equivalently if $r \geq a$. In this way, $c_1$ can be determined as the minimum of the coefficients in \eqref{2.13}, for any $k$ and $\varepsilon_1$ within the feasible region.

If $a=0$, any $r > 0$ satisfies \eqref{2.13} taking for instance $k = 1/r>0$ and $\varepsilon_1 = 3/4r>0$, taking $c_1$ again as the minimum of the coefficients in \eqref{2.13}.
\end{proof}

\begin{theorem}\label{tc1}
    Consider system \eqref{1.1} with a constant supply $f>r$ and  any choice of $(u_0,v_0)$ with $u_0 \in C^0(\Omega)$ and $v_0 \in W^{1,\theta}(\Omega)$, for some $\theta>n$. If moreover $r \geq a$, then $(u,v)$, the unique global solution to the system, is such that
$$\|u(\cdot,t)\|_{L^\infty(\Omega)} + \|v(\cdot,t)-f\|_{L^\infty(\Omega)} \to 0 \quad as ~t \to \infty.$$  
\end{theorem}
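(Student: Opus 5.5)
The plan is the standard Lyapunov argument: turn Lemma~\ref{l2.1} into integrability of $F_1$ in time, then use uniform H\"older regularity to upgrade this to uniform convergence. Existence, uniqueness, global boundedness and classical regularity of $(u,v)$ for the given initial data are taken from the well-posedness results quoted in the introduction (from \cite{FH1}); in particular $\|u(\cdot,t)\|_{L^\infty(\Omega)}+\|v(\cdot,t)\|_{W^{1,\infty}(\Omega)}\le C$ for all $t>0$.

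\textbf{Step 1: integrability of $F_1$.} We apply Lemma~\ref{l2.1}, which is legitimate since $f>r\ge a$. Integrating $\frac{d}{dt}E_1\le -c_1F_1$ over $(t_0,t)$ for a fixed $t_0>0$, and using $E_1\ge 0$, gives
\begin{equation*}
c_1\int_{t_0}^\infty\!\!\int_\Omega \big(u^2+(v-f)^2\big)\,dx\,dt\le E_1(t_0)<\infty .
\end{equation*}

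\textbf{Step 2: uniform H\"older bounds.} Since $u$ and $v$ are bounded, standard parabolic H\"older estimates apply. For $v$ we use the heat equation with bounded source $au-v+f$. For $u$ we use the divergence-form equation with bounded drift $\chi u\nabla v$; here one first needs a H\"older bound on $\nabla v$, or one applies De Giorgi--Nash--Moser/Porzio--Vespri type estimates directly. These give $\theta\in(0,1)$ and $C>0$ with
\begin{equation*}
\|u\|_{C^{\theta,\theta/2}(\overline\Omega\times[t,t+1])}+\|v\|_{C^{\theta,\theta/2}(\overline\Omega\times[t,t+1])}\le C\quad\text{for all } t\ge 1 .
\end{equation*}
Consequently $t\mapsto F_1(t)$ is uniformly continuous on $[1,\infty)$. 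A nonnegative, uniformly continuous function that is integrable on $[1,\infty)$ must tend to $0$ (Barbalat's lemma). Hence $\|u(\cdot,t)\|_{L^2(\Omega)}+\|v(\cdot,t)-f\|_{L^2(\Omega)}\to0$.

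\textbf{Step 3: upgrade to $L^\infty$.} Let $w$ denote either $u$ or $v-f$. Arguing by contradiction, suppose there are $t_k\to\infty$ and $x_k\in\overline\Omega$ with $|w(x_k,t_k)|\ge\delta>0$. The uniform spatial H\"older bound makes $|w|\ge\delta/2$ on $B_\rho(x_k)\cap\Omega$, with $\rho$ independent of $k$. Because $\Omega$ satisfies a uniform interior cone condition, this set has measure at least $c\rho^n$, so $\int_\Omega w^2(\cdot,t_k)\ge c'>0$, contradicting Step~2. Alternatively one can interpolate with Gagliardo--Nirenberg, $\|w\|_{L^\infty}\le C\|w\|_{C^\theta}^{\,1-\lambda}\|w\|_{L^2}^{\,\lambda}$, which gives the same conclusion.

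The main obstacle is Step 2, and specifically a uniform-in-time H\"older estimate for $u$ in the presence of the cross-diffusion term. I would first obtain $\nabla v\in L^\infty$ uniformly, which follows from boundedness of the solution via semigroup estimates for the $v$-equation. Then $\partial_t u=\nabla\cdot(D\nabla u+\chi u\nabla v)+g$ has bounded coefficients, and $g=ru(1-u)-uv$ is bounded, so the H\"older result of Porzio--Vespri applies on each cylinder $\Omega\times(t,t+2)$ with constants independent of $t$.
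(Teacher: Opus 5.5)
Your proposal is correct and follows essentially the same route as the paper: integrate the Lyapunov inequality of Lemma~\ref{l2.1} to get $\int_1^\infty F_1<\infty$, use uniform-in-time parabolic regularity to make $F_1$ uniformly continuous so that $F_1(t)\to 0$, and then interpolate to reach $L^\infty$. The differences are only technical: you work with H\"older bounds (via a $W^{1,\infty}$ bound on $v$ and Porzio--Vespri) rather than the $C^{2+\theta,1+\theta/2}$ and $W^{1,\infty}$ bounds the paper invokes, and your constant $1/c_1$ in the integrated inequality is the correct one (the paper writes $1/\varepsilon_1$).
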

\begin{proof}
    First, by Lemma \ref{l2.1}, as $r \geq a$ we can find $k,~c_1 >0$ such that $E_1(t) \geq 0$ and $\displaystyle \frac{d}{dt}E_1(t) \leq - c_1 F_1(t) \leq 0$ for all $t >0$. Given the boundedness of $u$ and $v$, this implies that for any $t >1$
    \begin{equation} \label{2.14}
          \int_1^\infty F_1(s) ~ds \leq - \frac{1}{\varepsilon_1} \int_1^\infty \left( \frac{d}{ds}  E_1(s) \right) ds \leq \frac{1}{\varepsilon_1} (E_1(1) - E_1(t)) \leq \frac{E_1(1)}{\varepsilon_1} < \infty.
    \end{equation}
    Moreover, by standard parabolic theory \cite{Ladyzhenskaya1968}, there exists $\theta \in (0,1)$ and $c>0$ such that
    \begin{equation}\label{2-holder}
           \|u(\cdot,t)\|_{C^{2+\theta, 1+\frac{\theta}{2}}(\bar{\Omega}\times [t,t+1])} + \|v(\cdot,t)\|_{C^{2+\theta, 1+\frac{\theta}{2}}(\bar{\Omega}\times [t,t+1])} \leq c, \quad \text{for all } t \geq 1.
    \end{equation}   
    As a result, $F_1(t)$ is uniformly continuous in $(1, \infty)$ and since $ \int_1^\infty F_1(s)~ ds< \infty$, we have $F_1(t) \to 0$ as $t \to \infty$, this is 
    $$F_1(t) = \|u(\cdot,t)\|^2 _{L^2(\Omega)} + \|v(\cdot,t)-f\|^2_{L^2(\Omega)} \to 0 \quad \text{as} ~t \to \infty.$$
    In particular, as a consequence of \eqref{2-holder}, which yields global boundedness of $u$ and $v$ in $W^{1,\infty}(\Omega)$ for all $t>1$, through a Gagliardo-Niremberg type inequality, we can improve the convergence to the $L^\infty$ norm, namely
    \begin{equation*}
    \begin{split}           
    ||u||_{L^\infty (\Omega)}+ ||v-f||_{L^\infty (\Omega)} &\leq C\left( ||u||_{W^{1,\infty}(\Omega)}^{\frac{n}{n+2}} \cdot ||u||_{L^2(\Omega)}^{\frac{2}{n+2}} +  ||v-f||_{W^{1,\infty}(\Omega)}^{\frac{n}{n+2}} \cdot ||v-f||_{L^2(\Omega)}^{\frac{2}{n+2}}  \right) \\[1 ex]
    &\leq \overline{C} \left(||u||^{\frac{2}{n+2}}_{L^2(\Omega)} + ||v-f||^{\frac{2}{n+2}}_{L^2(\Omega)}  \right) \to 0, \quad \text{as } t \to \infty,
    \end{split}
    \end{equation*}
    for some $C, ~\overline{C} >0$, which finishes the proof.
    \qed
\end{proof}
\subsection{The case $r>f$}
Next, we proceed similarly with $r>f$. In this case, the state $(0,f)$ is now unstable, while $(u_*, v_*)$ is positive and locally asymptotically stable. This time, we make use of the following functionals, which were introduced in \cite{BaiWinkler2016}, and have been applied to a wide range of chemotaxis problems, for instance \cite{Mizukami2017, Zhang2019, Deng2023}.
\begin{equation} \label{2.15}
\begin{split}
    E_2(t) &:= \int_\Omega \left(u - u_* - u_* \ln \frac{u}{u_*} \right) + \frac{k}{2} \int_\Omega (v-v_*)^2, 
    \\F_2(t) &:=  \int_\Omega \left (\frac{| \nabla u|}{u} \right )^2 + \int_\Omega |\nabla v |^2 + \int_\Omega (u-u_*)^2 + \int_\Omega (v-v_*)^2,    
\end{split}
\end{equation}
In this case, a further largeness hypothesis is required for $r$, in order to prove the following counterpart to Lemma \ref{l2.1}. 
\begin{lemma} \label{l2.3}
    Let $r>f$ and assume that $(u,v)$ is a global bounded classical solution of \eqref{1.1}. Then there exists $r_c>0$ and two positive constants $k$ and $c_2$ such that if $r > r_c$, the functionals $E_2$ and $F_2$ given in \eqref{2.15} satisfy
$$E_2(t) \geq 0, \quad \frac{d}{dt}E_2(t) \leq - c_2 F_2(t) \leq 0, \quad \text{for all } t >0.$$
\end{lemma}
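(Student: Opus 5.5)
Nonnegativity of $E_2$ follows from convexity: $\phi(s)=s-u_*-u_*\ln(s/u_*)$ satisfies $\phi(s)\ge 0$ for $s>0$, with equality only at $s=u_*$. Here $u>0$ by the strong maximum principle for nontrivial $u_0$.

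For the derivative, I will first compute $\frac{d}{dt}\int_\Omega \phi(u)=\int_\Omega (1-u_*/u)u_t$ using the $u$-equation and integration by parts. The diffusion term gives $-Du_*\int_\Omega |\nabla u|^2/u^2$. The taxis term gives $-\chi u_*\int_\Omega \frac{\nabla u\cdot\nabla v}{u}$. For the reaction term, the steady-state relations $r(1-u_*)=v_*$ give
\[
r(1-u)-v=-r(u-u_*)-(v-v_*),
\]
so the reaction contributes $-r\int_\Omega(u-u_*)^2-\int_\Omega(u-u_*)(v-v_*)$. Next I will compute $\frac{k}{2}\frac{d}{dt}\int_\Omega(v-v_*)^2$. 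Since $au_*-v_*+f=0$, this equals $-k\int_\Omega|\nabla v|^2+ak\int_\Omega(u-u_*)(v-v_*)-k\int_\Omega(v-v_*)^2$.

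The two cross terms are then estimated by Young's inequality:
\[
\chi u_*\Big|\int_\Omega \frac{\nabla u\cdot\nabla v}{u}\Big|\le \frac{Du_*}{2}\int_\Omega\frac{|\nabla u|^2}{u^2}+\frac{\chi^2u_*}{2D}\int_\Omega|\nabla v|^2,
\]
and
\[
|ak-1|\int_\Omega|u-u_*||v-v_*|\le \frac r2\int_\Omega(u-u_*)^2+\frac{(ak-1)^2}{2r}\int_\Omega(v-v_*)^2.
\]
Collecting terms, the coefficients are $-\frac{Du_*}{2}$ on $\int_\Omega|\nabla u|^2/u^2$, $-(k-\frac{\chi^2u_*}{2D})$ on $\int_\Omega|\nabla v|^2$, $-\frac r2$ on $\int_\Omega(u-u_*)^2$, and $-(k-\frac{(ak-1)^2}{2r})$ on $\int_\Omega(v-v_*)^2$. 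The constant $c_2$ is then the minimum of these four coefficients once they are all made positive.

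The main obstacle is choosing $k$ so that both $k>\chi^2u_*/(2D)$ and $k>(ak-1)^2/(2r)$ hold. Because $u_*=(r-f)/(r+a)<1$, taking $k>\chi^2/(2D)$ already settles the first condition uniformly in $r$. For the second I will take the natural choice $k=1/a$ when $a>0$, which cancels the cross term exactly. This works provided $1/a>\chi^2/(2D)$, so when this fails I instead fix $k=\max\{\chi^2/D,1/a\}$ (or $k=\chi^2/D+1$ when $a=0$). Since $(ak-1)^2/(2r)\to0$ as $r\to\infty$, the inequality $k-(ak-1)^2/(2r)\ge k/2$ holds once $r>r_c:=(ak-1)^2/k$. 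This is the threshold $r_c$ in Lemma \ref{l2.3}, and it depends only on $a,\chi,D$. Finally, I will check that $k$ does not depend on $r$, so that enlarging $r$ does not spoil the choice of $k$; this holds because the only $r$-dependence, through $u_*$, is bounded by $1$.
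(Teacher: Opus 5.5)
Your proof is correct. The computation of $\frac{d}{dt}E_2$ is the same as the paper's; the difference is in how you show the resulting expression is coercive.

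The paper writes $\frac{d}{dt}E_2$ exactly as $-\int_\Omega X^t\vect{P}X-\int_\Omega Y^t\vect{S}Y$ and applies Sylvester's criterion. This gives the sharp conditions $rk>(1-ka)^2/4$ and $k>\chi^2u_*/(4D)$. It then lets $k$ depend on $r$, with $k$ taken in the feasible interval below $k_2(r)$. The threshold $r_c$ is characterized only implicitly, through the roots of the cubic $\mathcal{P}(r)$ and a case distinction on how those roots are arranged.

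You instead absorb the two cross terms by Young's inequality with an equal split. This costs a factor $2$ in each condition: you need $k>\chi^2u_*/(2D)$ and $2rk>(ak-1)^2$ rather than the sharp versions. An unequal split, with the weight on the diagonal term tending to $1$, would recover the sharp conditions. In exchange, you use the uniform bound $u_*<1$ to fix $k$ independently of $r$. This gives a fully explicit threshold $r_c=(ak-1)^2/k$. It also gives $r_c=0$ whenever $a\chi^2<2D$, since then $k=1/a$ cancels the $(u-u_*)(v-v_*)$ term exactly.

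Your route is simpler and makes the underlying mechanism explicit. That mechanism is that the lower bound on $k$ coming from $\vect{S}$ stays bounded in $r$, while the admissible range coming from $\vect{P}$ grows with $r$; the paper conveys this through Figure~\ref{fig:f_cort} and the root analysis of $\mathcal{P}$. The paper's route gives the optimal parameter range for this particular Lyapunov functional. Both suffice for the existential statement.

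One cosmetic point: when your $r_c$ equals $0$, replace it by any positive number to match the requirement $r_c>0$.
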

\begin{proof}
The non-negativeness of $E_2$ can be proven using a Taylor expansion as in the original article by \cite{BaiWinkler2016}. Next, computing the time derivative of the first term in $E_2$ we have
\begin{equation}\label{2.18}
    \begin{split}
        & \frac{d}{dt}  \int_\Omega \left(u - u_* - u_* \ln \frac{u}{u_*} \right) =  \int_\Omega u_t \left( 1- \frac{u_*}{u} \right) 
        \\
       & \hspace{-0.3 cm}= D \int_\Omega \Delta u \left( 1- \frac{u_*}{u} \right)+ \chi \int_\Omega \nabla \cdot (u \nabla v) \left( 1- \frac{u_*}{u} \right) + r \int_\Omega u(1-u) \left( 1- \frac{u_*}{u} \right) - \int_\Omega uv \left( 1- \frac{u_*}{u} \right),           
        \end{split}
    \end{equation}
for all $t>0$. Integrating by parts the first two terms in \eqref{2.18} yields
    \begin{equation} \label{2.19}
    \begin{split}
        &\int_\Omega \Delta u \left( 1- \frac{u_*}{u} \right) = - \int_\Omega \nabla u \cdot \nabla \left( 1- \frac{u_*}{u} \right) = u_*\int_\Omega  \nabla u \cdot \nabla \left( \frac{1}{u} \right) = - u_* \int_\Omega \left( \frac{|\nabla u |}{u}\right)^2, \\
         &\int_\Omega \nabla \cdot (u \nabla v) \left( 1- \frac{u_*}{u} \right) = - \int_\Omega u \nabla v \cdot \nabla \left( 1- \frac{u_*}{u} \right) = - u_* \int_\Omega \frac{1}{u} \nabla u \cdot \nabla v,
    \end{split}
    \end{equation}
for all $t >0$. With respect to the last two summands in \eqref{2.18}, a rearrangement of the terms leads to
\begin{equation} \label{2.21}
        \begin{split}
           & r \int_\Omega u(1-u) \left( 1- \frac{u_*}{u} \right) - \int_\Omega uv \left( 1- \frac{u_*}{u} \right)
            = \int_\Omega (u-u_*)(r- ru - v)  \\
           & = \int_\Omega (u-u_*)(r- ru - ru_* + ru_* - v + v_* - v_*) = \int_\Omega (u-u_*) \left[ -r(u-u_*) - (v-v_*)\right] \\
           & + \int_\Omega (u-u_*) (r-ru_* - v_*) = - r \int_\Omega (u-u_*)^2 - \int_\Omega (u-u_*)(v-v_*),  \quad \text{for all } t>0.    
        \end{split}
\end{equation}
where we used that $\displaystyle r-ru_* - v_* = r\left(1-u_* - \frac{v_*}{r}  \right) = 0$, as a consequence of $(u_*, v_*)$ solving the first equation of system \eqref{1.1}. Thus, by substituting \eqref{2.19}--\eqref{2.21} into \eqref{2.18} we obtain
\begin{equation} \label{2.22}
\begin{split}
    \frac{d}{dt} \int_\Omega \left(u - u_* - u_* \ln \frac{u}{u_*} \right) &= - Du_* \int_\Omega \left( \frac{|\nabla u |}{u}\right)^2 - \chi  u_* \int_\Omega \frac{\nabla u \cdot \nabla v}{u}
    \\& - r \int_\Omega (u-u_*)^2 - \int_\Omega (u-u_*)(v-v_*), \quad \text{for all } t>0.    
\end{split}
\end{equation}
Proceeding similarly with the second term in $E_2$ one obtains    \begin{equation} \label{2.23}
        \begin{split}
           & \frac{1}{2} \frac{d}{dt} \int_\Omega (v-v_*)^2 =  -\int_\Omega |\nabla v|^2 + a \int_\Omega (v-v_*)(u-u_*) - \int_\Omega (v-v_*)^2, \quad \text{for all } t>0.
        \end{split}
\end{equation}
Thus, combining \eqref{2.22} and \eqref{2.23} results in
\begin{equation}\nonumber
    \begin{split}
        \frac{d}{dt} E_2(t) &=  - Du_* \int_\Omega \left( \frac{|\nabla u |}{u}\right)^2 - k \int_\Omega |\nabla v|^2 - \chi  u_* \int_\Omega \frac{\nabla u \cdot \nabla v}{u} - r \int_\Omega (u-u_*)^2 \\
        &- k \int_\Omega (v-v_*)^2 - (1-ka) \int_\Omega (u-u_*)(v-v_*),
    \end{split}
\end{equation}
which can be rewritten in terms of two quadratic forms as
\begin{equation} \label{2.24}
    \frac{d}{dt} E_2(t) = - \int_\Omega X^t \cdot \vect{P} \cdot X - \int_\Omega Y^t \cdot \vect{S} \cdot Y,
\end{equation}
with $X := \big ( u - u_*, v-v_* \big)^t$, $Y := \left( \displaystyle\frac{|\nabla u|}{~u}, |\nabla v| \right)^t$, and
\begin{equation} \label{2.25}
    \vect{P} := \begin{pmatrix}
        r &  \displaystyle \frac{1-ka}{2} \\\\
        \displaystyle \frac{1-ka}{2} & k
    \end{pmatrix}, \quad  \vect{S}:= \begin{pmatrix}
        D u_* & \displaystyle \frac{\chi \cdot u_*}{2} \\\\
        \displaystyle \frac{\chi \cdot u_*}{2} & k
    \end{pmatrix}.
\end{equation}
The last step of the proof is to show that the matrices $\vect{P}$ and $\vect{S}$ are positive-definite, which implies the existence of $c_2 > 0$ such that for all $X$ and $Y$
$$ X^t \cdot \vect{P} \cdot X \geq c_2 |X|^2 \quad \text{and} \quad Y^t \cdot \vect{S} \cdot Y \geq c_2 |Y|^2.$$
Thus, to ensure the positive-definiteness of both matrices, we employ Sylvester's criterion. Both matrices being of size $2\times 2$, the principal minors are simply given by
\begin{equation} \nonumber
    \begin{split}
       & M_1(\vect{P}) = r >0, \quad \hspace{0.42 cm} M_2(\vect{P}) = \det(\vect{P}) = rk - \frac{(1-ka)^2}{4}, \\
        & M_1(\vect{S}) = Du_* >0, \quad M_2(\vect{S}) = \det(\vect{S}) = Dku_* - \frac{\chi^2 u_*^2}{4}.
    \end{split}
\end{equation}
This means that $k>0$ has to be selected so that $\displaystyle M_2(\vect{P})>0$ and $\displaystyle M_2(\vect{S})>0$, this is
\begin{equation}\label{cambio}
    rk - \frac{(1-ka)^2}{4} >0, \quad k > k_{\min}(r) := \frac{\chi^2 (r-f)}{4D(r+a)},
\end{equation}
where in $k_{\min}$ we have substituted $u_*$ by its value, given in \eqref{estados}. Assuming first that $a>0$, solving for $k$ in the first inequality leads to
\begin{equation}\label{cambio-12}
    k \in \left(\frac{(a+2r) - 2\sqrt{r^2+ar}}{a^2}, ~\frac{(a+2r) + 2\sqrt{r^2+ar}}{a^2} \right) =: \big(k_1(r), k_2(r)\big).
\end{equation}
Therefore, the feasible region for $(r,k)$ fulfilling \eqref{cambio} is determined by those $r>f$ such that $k_{\min}(r) < k_2(r)$. There are however two alternatives for this. As $k_{\min}(r)$ is monotonically increasing, with a finite limit $\lim_{r \to \infty} k_{\min} = \chi^2/4D$, whereas $\lim_{r \to \infty} k_2(r) = +\infty$, the graphs of $k_{\min}$ and $k_2(r)$ may not intersect, or have two different intersections (or one with multiplicity two), as schematically depicted in Figure \ref{fig:f_cort}.
\begin{figure}[htp]
    \centering
    \includegraphics[width=13.25 cm]{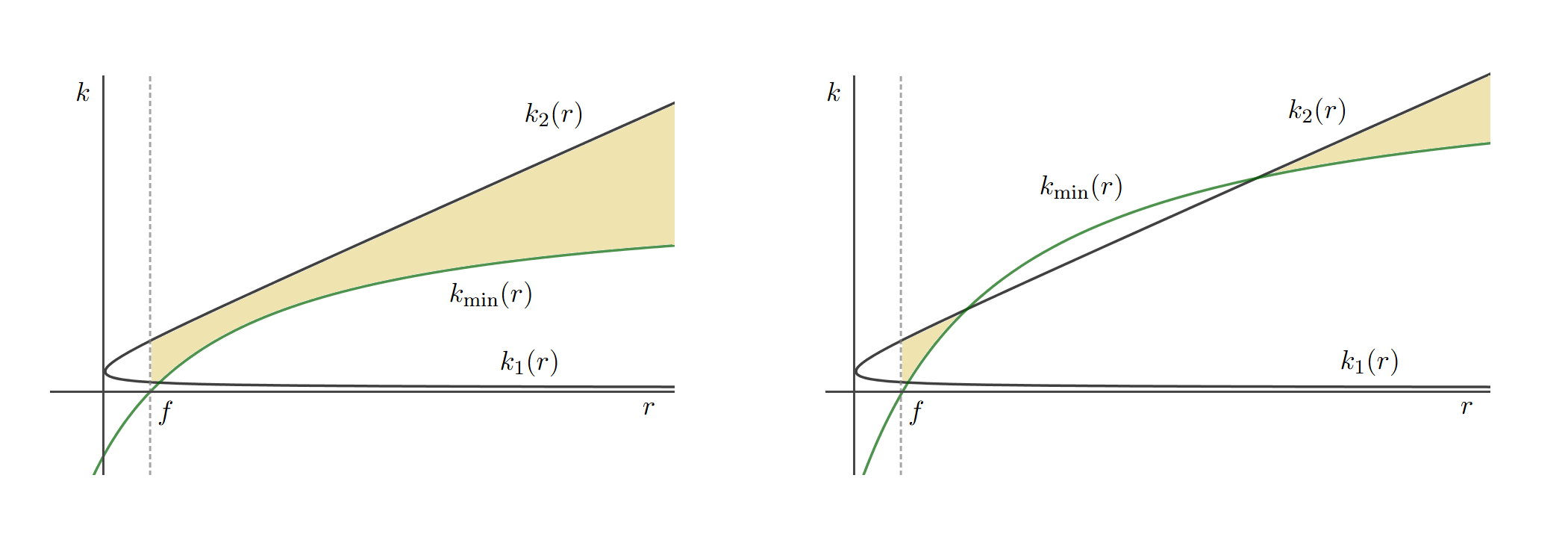}
    \caption{Two different possibilities for $k_{\min}(r)$ and $k_2(r)$ for \eqref{cambio} to hold. The feasible region is shaded.}
    \label{fig:f_cort}
\end{figure}\\
If, as shown on the left panel, $k_{\min}$ and $k_2$ do not intersect, then for any $r>f$ and an arbitrary $k \in (k_{\min}(r), k_2(r))$, condition \eqref{cambio} is satisfied. If, on the contrary, $k_{\min}$ and $k_2$ intersect twice, as represented on the right panel, \eqref{cambio} can only hold for those values of $r$ for which $k_{\min}(r)$ lies below $k_2(r)$, producing the feasible region shaded.

This is equivalently characterized by the following cubic polynomial $\mathcal{P}(r)$, obtained from computing the intersections between the quadratic in \eqref{cambio}, which gives rise to $k_1$ and $k_2$, and $k_{\min}$.
\begin{equation}\label{cambio2}
\begin{split}
    \mathcal{P}(r) &:=  \big(16Da^2\chi^2\big)r^3 + \big(24 D a^3 \chi^2-a^4 \chi^4  - 16 fDa^2 \chi^2 - 16D^2a^2\big) r^2 \\
    &+ \big( 2 f a^4 \chi^4 + 8Da^4\chi^2 - 24fDa^3\chi^2 - 32D^2a^3\big)r - \big(16D^2a^4 + 8Da^4\chi^2f + a^4\chi^4 f^2 \big).
\end{split}
\end{equation}
As the leading coefficient is $16Da^2\chi^2 > 0$, and $\mathcal{P}(0) = - \big(16D^2a^4 + 8Da^4\chi^2f + a^4\chi^4 f^2 \big)<0$, then $\mathcal{P}(r)$ must have at least one positive root. Denoting by $r_1$ the first positive root of $\mathcal{P}$, it corresponds to the intersection of $k_1$ and $k_{\min}$. If there are no further positive roots aside from $r_1$, then $k_{\min}$ and $k_2$ do not intersect, and thus we can take $r_c= r_1$. 

On the contrary, if $r_1$ is not the only positive real root, by Descartes' Rule of Signs all three roots of $\mathcal{P}$ must be positive, this is $r_1<r_2 \leq r_3$, corresponding to the situation on the right panel of Figure \ref{fig:f_cort}. In this case, \eqref{cambio} is only satisfied if either $r \in (f,r_1)$ or $r \in (r_3, +\infty)$, and thus we define $r_c = r_3$. In this way, on both cases, for $r > r_c$, matrices $\vect{P}$ and $\vect{S}$ are positive definite, concluding the result. 

In the case $a=0$, \eqref{cambio} directly reduces to
$rk > 0$ and  $k>\frac{\chi^2  (r-f)}{4D}$. Hence, any $r>f$ satisfies the inequality by taking an arbitrary $k$ above $\frac{\chi^2 (r-f)}{4D}$. 

The positive-definiteness of $\vect{P}$ and $\vect{S}$ combined with \eqref{2.24} directly leads to the result, given the structure of $F_2$, $X$ and $Y$.
\qed
\end{proof}
Lastly, the following theorem ensures the counterpart to Theorem \ref{tc1}, the convergence of solutions to $(u_*, v_*)$ when $r > f$ and moreover $r > r_c$. The only difference is that in this case, the existence of the unstable steady state $(0,f)$ implies that any solution converges to $(u_*, v_*)$, except if the initial data is precisely $(0,f)$. For brevity reasons, we omit the proof, as it follows the same steps. 
\begin{theorem} \label{tc2}
Consider system \eqref{1.1} with a constant supply $f<r$ and any choice of $(u_0,v_0) \not\equiv (0,f)$ with $u_0 \in C^0(\Omega)$ and $v_0 \in W^{1,\theta}(\Omega)$, for some $\theta>n$. Then, if $r > r_c$, the solution $(u,v)$ satisfies
$$\|u(\cdot,t)-u_*\|_{L^\infty(\Omega)} + \|v(\cdot,t)-v_*\|_{L^\infty(\Omega)} \to 0 \quad as ~t \to \infty.$$
\end{theorem}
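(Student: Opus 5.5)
We mirror the proof of Theorem \ref{tc1}, using Lemma \ref{l2.3} in place of Lemma \ref{l2.1}. The one new ingredient concerns the logarithm in $E_2$: it is finite only if $u>0$ everywhere for $t>0$. We therefore first treat the degenerate initial data, and then verify that $E_2(t_0)<\infty$ for some $t_0>0$.

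\emph{Degenerate data.} If $u_0\equiv 0$, uniqueness shows that $u\equiv 0$ for all time. The $v$-equation then reduces to the linear heat equation $v_t=\Delta v - v + f$, so $v\to f$, and convergence to $(u_*,v_*)$ fails. Consequently, the hypothesis $(u_0,v_0)\not\equiv(0,f)$ must be read as requiring $u_0\not\equiv 0$, and we work under this assumption.

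\emph{Positivity of $u$.} With $u_0\ge 0$ and $u_0\not\equiv 0$, we apply the strong maximum principle to the $u$-equation, written in the form
$$u_t=D\Delta u+\chi\nabla u\cdot\nabla v+(\chi\Delta v+r-ru-v)u,$$
whose coefficients are bounded for $t\ge 1$ by the Hölder estimate \eqref{2-holder}. This gives $u>0$ on $\overline\Omega\times(0,\infty)$. In particular, $\inf_{\Omega} u(\cdot,1)>0$, so $E_2(1)<\infty$.

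\emph{Integral bound.} Applying Lemma \ref{l2.3} on $[1,\infty)$ gives $E_2\ge 0$ and $E_2'\le -c_2F_2$. Integrating,
$$\int_1^\infty F_2\,ds\le \frac{E_2(1)}{c_2}<\infty,$$
and in particular
$$\int_1^\infty\Big(\|u-u_*\|_{L^2}^2+\|v-v_*\|_{L^2}^2\Big)\,ds<\infty .$$

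\emph{$L^2$ convergence.} The Hölder bounds \eqref{2-holder} make the function $t\mapsto \|u-u_*\|_{L^2}^2+\|v-v_*\|_{L^2}^2$ uniformly continuous on $(1,\infty)$. An integrable, uniformly continuous, nonnegative function must tend to zero, so this quantity $\to 0$ as $t\to\infty$. We use only these two terms of $F_2$ and drop the gradient terms.

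\emph{Upgrade to $L^\infty$.} The estimate \eqref{2-holder} also gives uniform bounds for $u-u_*$ and $v-v_*$ in $W^{1,\infty}(\Omega)$. The same Gagliardo--Nirenberg inequality used in Theorem \ref{tc1} then yields
$$\|w\|_{L^\infty}\le C\|w\|_{W^{1,\infty}}^{n/(n+2)}\|w\|_{L^2}^{2/(n+2)}$$
for $w=u-u_*$ and $w=v-v_*$. Hence both converge to $0$ in $L^\infty(\Omega)$, which proves the theorem.

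\emph{Main obstacle.} The difficult step is justifying the use of $E_2$: proving strict positivity of $u$ for $t>0$, so that the logarithm is defined and the differential identities in Lemma \ref{l2.3} are legitimate, together with the correct interpretation of the excluded initial data. Once $E_2(1)<\infty$ is established, the remaining steps are routine repetitions of the proof of Theorem \ref{tc1}.
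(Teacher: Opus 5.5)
Your proposal is correct and follows the route the paper intends: the paper omits this proof, saying it repeats the steps of Theorem~\ref{tc1} with Lemma~\ref{l2.3} in place of Lemma~\ref{l2.1}, namely integrability of $F_2$, uniform continuity from \eqref{2-holder}, $L^2$ decay, and then Gagliardo--Nirenberg. Two of your additions are correct refinements that the paper's statement glosses over. First, your positivity step is what makes $E_2(1)$ finite. Second, you rightly note that the hypothesis must exclude every datum with $u_0\equiv 0$, not only $(u_0,v_0)\equiv(0,f)$, because $u_0\equiv 0$ with $v_0\not\equiv f$ gives $u\equiv 0$ and $v\to f$. One small fix: positivity on $(0,1]$ should come from two facts. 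One is the boundedness of the coefficients of a classical solution on $\overline{\Omega}\times[\tau,1]$. The other is that $u(\cdot,\tau)\not\equiv 0$ for small $\tau>0$. It does not come from \eqref{2-holder}, which only covers $t\ge 1$.
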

\section{Conclusion}
Throughout this work, we have proved Theorems \ref{tc1} and \ref{tc2} concerning global asymptotic stability of the spatially homogeneous steady states $(0,f)$ and $(u_*, v_*)$, ensuring the convergence of solutions in $L^\infty(\Omega)$ norm. These were however obtained under additional the requirements that $r \geq a$ in the first case, and $r > r_c$ in the second one.

Although $r_c$ is not explicitly defined, as it involves solving the cubic polynomial \eqref{cambio2}, some particular cases can be explored. As already mentioned in the proof of Lemma \ref{l2.3}, if $a=0$, then $r_c =0$. In this case, where the chemorepellent is not self-produced by the species, the second equation in \eqref{1.1} is linear and uncoupled, and solutions can easily be seen to stabilize toward the steady state $v\equiv f$. Thus, once diffusion homogenizes the chemical concentration over $\Omega$, chemotaxis plays no role, and the bacterial density also equilibrates toward its corresponding steady state.  Another important case is the taxis-free scenario, i.e. $\chi = 0$. In such case, the conditions in \eqref{cambio} are also fulfilled by any $r>0$ if $k$ is taken between $k_1$ and $k_2$, given in \eqref{cambio-12}.
 
In the regime not covered by these parameter restrictions, different phenomena may arise. In \cite{FH1}, numerical evidence shows pattern formation arising after initiating the system with a perturbation of the nontrivial steady state $(u_*,v_*)$, considering a large enough values of $\chi$. For this to occur, $r$ has to be moderately high --- in particular between $r_2$ and $r_3$, the two largest roots of the polynomial $\mathcal{P}$ in \eqref{cambio2}--- so as to counteract the effect of the chemorepellent, while not large enough to drive the system to the spatially homogeneous equilibrium.

\section*{Acknowledgments}
This work was supported by Project PID2022-141114NB-I00 from the Spanish Ministry of Science and Innovation and by Grant FPU23/03170 from the Spanish Ministry of Science, Innovation and Universities (F.H.-H.)

\end{document}